\newcommand{\Hr}{{\rm H}}
\newcommand{\Wr}{{\rm W}}
\newcommand{\Lr}{{\rm L}}
\newcommand{\Cr}{{\rm C}}
\newcommand{\kt}{k_\text{int}}
\newcommand{\Th}{{{\cal T}_h}}
\title{Solving PDEs by Variational Physics-Informed Neural Networks:  an a posteriori error analysis}
\author{Stefano Berrone\thanks{Dipartimento di Scienze Matematiche, Politecnico di Torino, Corso Duca degli Abruzzi 24, 10129 Torino, Italy. stefano.berrone@polito.it (S. Berrone), claudio.canuto@polito.it (C. Canuto), moreno.pintore@polito.it (M. Pintore).}
	\And
	Claudio Canuto\footnotemark[1]
	\And Moreno Pintore\footnotemark[1]
}
\begin{document}

\maketitle

\begin{abstract}We consider the discretization of elliptic boundary-value problems by variational physics-informed neural networks (VPINNs), in which test functions are continuous, piecewise linear functions on a triangulation of the domain. We define an a posteriori error estimator, made of a residual-type term, a loss-function term, and data oscillation terms. We prove that the estimator is both reliable and efficient in controlling the energy norm of the error between the exact and VPINN solutions. Numerical results are in excellent agreement with the theoretical predictions.
\end{abstract}

\keywords{Deep neural networks, a posteriori error estimators, Petrov-Galerkin discretizations, elliptic boundary-value problems}

\vspace{0.3cm}
\noindent \textbf{\textit{MSC-class}}  35A01, 65L10, 65L12, 65L20, 65L70

\section{Introduction} \label{sec1}

The possibility of using deep-learning tools for solving complex physical models has attracted the attention of many scientists over the last few years. We have in mind in this paper models that are mathematically described by partial differential equations, supplemented by suitable boundary and initial conditions. In the most general setting, if no information on the model is available except the knowledge of some of its solutions, the model may be completely surrogated by one or more neural network, trained by data (i.e., by the known solutions). However, in most situations of interest, the mathematical model is known (e.g., the Navier-Stokes equations describing an incompressible flow), and such information may be suitably exploited in training the network(s): one gets the so-called Physics Informed Neural Networks (PINNs). This approach was first proposed in \cite{raissi2019physics}, and it inspired further works such as e.g. \cite{tartakovsky2018learning} or \cite{yang2019adversarial}, until the recent paper \cite{LanthalerMishraKarniadakis2021} which presents a very general framework for the solution of operator equations by deep neural networks. PINNs are trained by using the strong form of the differential equations, which are enforced at a set of points in the domain by suitably defining the loss function. In this sense, PINNs can be viewed as particular instances of least-square/collocations methods.

Based on the weak formulation of the differential model, the so-called Variational Physics-Informed Neural Networks (VPINNs), proposed in \cite{kharazmi2019variational}, enforce the equations by means of suitably chosen test functions, not necessarily represented by neural networks \cite{khodayi2020varnet}; they are instances of least-square/Petrov-Galerkin methods. While the construction of the loss function is generally less expensive for PINNs than for VPINNs, the latter allow for the treatment of models with less regular solutions, as well as an easier enforcement of boundary conditions.  In addition, the error analysis for VPINNs takes advantage of the available results for the discretization of variational problems, in fulfilling the assumptions of Lax-Richmyer's theorem `stability plus consistency imply convergence'. Actually, consistency results follow rather easily from the recently established approximation properties of neural networks in Sobolev spaces (see, e.g., \cite{elbrachter2021deep}, \cite{guhring2020error}, \cite{opschoor2020deep}, \cite{kutyniok2021theoretical}, \cite{opschoor2021exponential}, \cite{gonon2021deep}), whereas the derivation of stability estimates for the neural network solution appears to be a less trivial task: indeed, a neural network is identified by its weights, which are usually much more than the conditions enforced in its training. In other words, the training of a neural network is functionally an ill-posed problem.

To this respect,  we considered in \cite{BeCaPi2021} a Petrov-Galerkin framework in which trial functions are defined by means of neural networks, whereas test functions are made of continuous, piecewise linear functions on a triangulation of the domain. Relying on an inf-sup condition between spaces of piecewise polynomial functions, we derived an a priori error estimate in the energy norm between the exact solution of an elliptic boundary-value problem and a high-order interpolant of a deep neural network, which minimizes the loss function. Numerical results indicate that the error follows a similar behavior when the interpolation operator is turned off. 

The purpose of the present paper is to perform an a posteriori error analysis for VPINNs, i.e., to get estimates on the error which only depend on the computed VPINN solution, rather than the unknown exact solution. This is important to get a practical and quantitative information on the quality of the approximation. After setting the model elliptic boundary-value problem in Sect. \ref{sec:setting}, and the corresponding VPINN discretization in Sect. \ref{sec:sub_discretization},  we define in Sect. \ref{sec:aposteriori-theory} a computable residual-type error estimator, and prove that it is both reliable and efficient in controlling the energy error between the exact solution and the VPINN solution. Reliability means that the global error is upper bounded by a constant times the estimator, efficiency means that the estimator cannot over-estimate the energy error, since the latter  is lower bounded by a constant times the former up to data oscillation terms. The proposed estimator is obtained by summing up several terms: one is the classical residual-type estimator in finite elements, measuring the bulk error inside each element of the triangulation as well as the inter-element gradient jumps; another term accounts for the magnitude of the loss function after minimization is performed; the remaining terms measure data oscillations, i.e., the errors committed by locally projecting the equation's coefficients and right-hand side upon suitable polynomial spaces. The estimator can be written as a sum of elemental contributions, thereby allowing its use within an adaptive discretization strategy which refines the elements carrying the largest contributions to the estimator.

%*******************************************************************************
\section{The model boundary-value problem} \label{sec:setting}
%*******************************************************************************

Let $\Omega \subset \mathbb{R}^n$ be  a bounded polygonal/polyhedral domain with Lipschitz boundary $\Gamma=\partial\Omega$.

Let us consider the model elliptic boundary-value problem
\begin{equation}\label{eq:model-pb}
\begin{cases}
Lu:=-\nabla \cdot (\mu \nabla u) + \boldsymbol{\beta}\cdot \nabla u + \sigma u =f & \text{in \ } \Omega\,, \\
u=0 & \text{on \ } \Gamma \,, \end{cases}
\end{equation}
where $\mu, \sigma \in \Lr^\infty(\Omega)$, $ \boldsymbol{\beta} \in (\Wr^{1,\infty}(\Omega))^n$ satisfy $\mu \geq \mu_0$, $\sigma - \frac12 \nabla \cdot \boldsymbol{\beta} \geq 0$ in $\Omega$ for some constant $\mu_0>0$, whereas $f \in L^2(\Omega)$.

\smallskip
Setting $V=\Hr^1_{0}(\Omega)$,  define the bilinear and linear forms 
\begin{equation}\label{eq:form a}
a:V\times V \to \mathbb{R}\,, \qquad a(w,v)=\int_\Omega \mu \nabla w \cdot \nabla v + \boldsymbol{\beta}\cdot \nabla w \, v + \sigma w \, v\,,
\end{equation}
\begin{equation}\label{eq:forms F}
F:V\to \mathbb{R}\,, \qquad F(v)=\int_\Omega f \, v  \,;
\end{equation}
denote by $\alpha \geq \mu_0$  the coercivity constant of the form $a$, and by $\Vert a \Vert$, $\Vert F \Vert$ the continuity constants of the forms $a$ and $F$. Problem \eqref{eq:model-pb} is formulated variationally as follows: {\it Find $u \in V $ such that}
\begin{equation}\label{eq:model-pb-var}
a(u,v)=F(v) \qquad \forall v \in V\,.
\end{equation}

\begin{remark}[Other boundary conditions]\label{rem:other-bcs}
{\rm
The forthcoming formulation of the discretized problem and the a posteriori error analysis can be extended without pain to cover the case of mixed Dirichlet-Neumann boundary conditions, namely $u=g$ on $\Gamma_D$, $\mu \partial_n u =\psi$ on $\Gamma_N$, with $\Gamma_D \cup \Gamma_N = \Gamma$. We just consider homogeneous Dirichlet conditions to avoid an excess of technicalities. 
}
\end{remark}

%*******************************************************************************
\subsection{The VPINN discretization} \label{sec:sub_discretization}
%*******************************************************************************
We aim at approximating the solution of Problem \eqref{eq:model-pb} by a generalized Petrov-Galerkin strategy. 

To define the subset of $V$ of trial functions, let us choose a fully-connected feed-forward neural network structure $\NN$, with $n$ input variables and 1 output variable, identified by the number of layers $L$, the layer widths $N_\ell$,  $\ell=1, \dots, L$, and the activation function $\rho$. Thus, each choice of the weights ${\mathbf w} \in \mathbb{R}^N$ defines a mapping $w^\NN :  \boldsymbol{x} \mapsto w(\boldsymbol{x},{\mathbf w})$, which we think as restricted to the closed domain $\bar{\Omega}$; let us denote by $W^\NN$ the manifold containing all functions that can be generated by this neural network structure. We enforce the homogeneous Dirichlet boundary conditions by multiplying each $w$ by a fixed smooth function $\Phi \in V$ (we refer to \cite{sukumar2022} for a general strategy to construct this function); we assume that $v^\NN = \Phi w^\NN$ belongs to $V$ for any $w^\NN \in W^\NN$. In conclusion, our manifold of trial functions will be
$$
V^\NN = \{ v^\NN \in V : v^\NN=\Phi w^\NN \text{ for some }w^\NN \in W^\NN \}\,.
$$

To define the subspace of $V$ of test functions,  let us introduce  a conforming, shape-regular triangulation ${\cal T}_h= \{ E \}$ of $\bar{\Omega}$ with meshsize $h>0$ and let $V_h \subset V$ be the linear subspace formed by the functions which are piecewise linear polynomials  over the triangulation ${\cal T}_h$. Furthermore, let us introduce computable approximations of the forms $a$ and $F$ by numerical quadratures. Precisely, for any $E \in {\cal T}_h$, let $\{(\xi^E_\iota,\omega^E_\iota) : \iota \in I^E\}$ be the nodes and weights of a quadrature formula of precision $q \geq 2$  
on $E$. Then, assuming that all data $\mu$, $\boldsymbol{\beta}$, $\sigma$, $f$ are continuous in each element of the triangulation, we define the approximate forms
\begin{equation}\label{eq:def-ah}
a_h(w,v)= \sum_{E \in {\cal T}_h} \sum_{\iota \in I^E} [\mu \nabla w \cdot \nabla v + \boldsymbol{\beta}\cdot \nabla w \, v + \sigma w v](\xi^E_\iota) \,\omega^E_\iota\,, 
\end{equation}
\begin{equation}\label{eq:def-Fh}
F_h(v) =  \sum_{E \in {\cal T}_h}  \sum_{\iota \in I^E} [ f v](\xi^E_\iota) \,\omega^E_\iota \,.
\end{equation}

With these ingredients at hand, we would like to approximate the solution of Problem \eqref{eq:model-pb-var} by some $u^{\cal N\!N} \in V^{\cal N\!N}$ satisfying
\begin{equation}\label{eq:PGproblem}
a_h(u^{\cal N\!N},v_h)=F_h(v_h) \qquad \forall v_h \in V_h\,. 
\end{equation}

In order to handle this problem by the neural network, let us introduce a basis in $V_h$, say $V_h = \text{span}\{\varphi_i : i\in I_h\}$, and for any $w \in V $ let us define the residuals
\begin{equation}\label{eq:residuals}
r_{h,i}(w)=F_h(\varphi_i)-a_h(w,\varphi_i)\,, \qquad i \in I_h\,,
\end{equation}
as well as the loss function
\begin{equation}\label{eq:loss-function}
R_h^2(w) = \sum_{i \in I_h} r_{h,i}^2(w) \,. 
\end{equation}
Then, we search for a global minimum of the loss function in $V^{\cal N\!N}$, i.e., we consider the following minimization problem:  {\it Find $u^{\cal N\!N} \in V^{\cal N\!N}$ such that}
\begin{equation}\label{eq:min-prob}
u^{\cal N\!N} \in \displaystyle{\text{arg}\!\!\!\!\min_{w \in V^{\cal N\!N}}}\, R_h^2(w) \,.
\end{equation}

Note that any solution $u^{\cal N\!N}$ of \eqref{eq:PGproblem} annihilates the loss function, hence it is a solution of \eqref{eq:min-prob}; such a solution may not be unique, since the set of equations \eqref{eq:PGproblem} may be underdetermined (in particular, for $f=0$ one may obtain a non-zero $u^{\cal N\!N}$, see \cite[Sect. 6.3]{BeCaPi2021}). On the other hand, system \eqref{eq:PGproblem} may be overdetermined, and admit no solution; in this case, the loss function will have strictly positive minima.

\begin{remark}[Discretization with interpolation] \label{rem:no-interp}{\rm
In order to reduce and control the randomic effects related to the use of a network depending upon a large number of weights, in \cite{BeCaPi2021} we proposed to locally project the neural network upon a space of polynomials,  before computing the loss function. 

To be precise, we have considered a conforming, shape-regular partition ${\cal T}_H=\{G\}$ of $\bar{\Omega}$, which is equal to or coarser than ${\cal T}_h$ (i.e., each element $E \in {\cal T}_h$ is contained in an element $G \in {\cal T}_H$) but compatible with ${\cal T}_h$ (i.e., its meshsize $H>0$ satisfies $H\lesssim h$). Let $V_H \subset V$ be the linear subspace formed by the functions which are  piecewise polynomials of degree $\kt=q+1$ over the triangulation ${\cal T}_H$, and let ${\cal I}_H : \Cr^0(\bar{\Omega}) \to V_H$ be the associated element-wise Lagrange interpolation operator. 

Given a neural network $w \in V^\NN$, let us denote by $w_H= {\cal I}_H w^\NN \in V_H$ its piecewise polynomial interpolant.
Then, the definition \eqref{eq:residuals} of local residuals is modified as 
\begin{equation}\label{eq:residuals-tilde}
\tilde{r}_{h,i}(w)=F_h(\varphi_i)-a_h(w_H,\varphi_i)\,, \qquad i \in I_h\,;
\end{equation}
consequently, the loss function takes the form
\begin{equation}\label{eq:loss-function-tilde}
\tilde{R}_h^2(w) = \sum_{i \in I_h} \tilde{r}_{h,i}^2(w) \,,
\end{equation}
and we define a new approximation of the solution of Problem \eqref{eq:model-pb-var} by setting
\begin{equation}\label{eq:min-prob-tilde}
\tilde{u}^\NN_H = {\cal I}_H \tilde{u}^\NN \in V_H\,, \qquad \text{where} \quad \tilde{u}^{\cal N\!N} \in \displaystyle{\text{arg}\!\!\!\!\min_{w \in U^{\cal N\!N}}}\, \tilde{R}_h^2(w) \,.
\end{equation}
In \cite{BeCaPi2021} we derived an a priori error estimate for the error $\Vert u - \tilde{u}^\NN_H \Vert_V$, and we documented the error decay as $h \to\infty$, which turns out to have a more regular behavior that the error $\Vert u - {u}^\NN \Vert_V$, although the latter is usually smaller. 

The subsequent a posteriori error analysis could be extended to give a control on the error produced by $\tilde{u}^\NN_H $ as well. For the sake of simplicity, we do not pursue such a task here.
}
\end{remark}

%*******************************************************************************
\section{The a posteriori error estimator}\label{sec:aposteriori-theory}

%*******************************************************************************

In order to build an error estimator, let us first choose, for any $E \in {\cal T}_h$ and any $k \geq 0$, a projection operator $\Pi_{E,k} : L^2(E) \to \mathbb{P}_k(E)$ satisfying 
\begin{equation}\label{eq:mean}
\int_E \Pi_{E,k} \varphi = \int_E \varphi \qquad \forall \varphi \in L^2(E) \,.
\end{equation}
This allows us to introduce approximate bilinear and linear forms
\begin{equation}\label{eq:form aPi}
a_\pi(w,v)=\sum_{E \in {\cal T}_h} \int_E \Pi_{E,q}\left( \mu \nabla w \right) \cdot \nabla v + \Pi_{E,q-1}\left( \boldsymbol{\beta}\cdot \nabla w + \sigma w\right)  v\,,
\end{equation}
\begin{equation}\label{eq:forms FPi}
 F_\pi (v)=\sum_{E \in {\cal T}_h} \int_E \left(\Pi_{E,q-1} f\right)  v  \,,
\end{equation}
which are useful in the forthcoming derivation. Indeed, the coercivity of the form $a$ allows us to bound the $V$-norm of the error as follows:
\begin{equation}\label{eq:inf-sup}
\vert u - u^\NN \vert_{1,\Omega} \leq \frac1\alpha \sup_{v \in V} \frac{a(u - u^\NN,v)}{\vert v \vert_{1,\Omega}} \,.
\end{equation}
We split the numerator as
\begin{equation}\label{eq:split-a}
\begin{split}
a(u - u^\NN,v) &= F(v) -a(u^\NN,v) 
= \underbrace{F(v)-F_\pi(v)}_{(\text{I})}  \ + \ \underbrace{F_\pi(v)-a_\pi(u^\NN,v)}_{(\text{III})}\\
& \quad  + \ \underbrace{a_\pi(u^\NN,v) - a(u^\NN,v)}_{(\text{II})} 
\end{split}
\end{equation}
and we proceed to bound each term on the right-hand side.

The terms $({\rm I})$ and  $({\rm II})$ account for the element-wise projection error upon polynomial spaces; they are estimated in the next two Lemmas.  

\begin{lemma}\label{lem:bound-I}
The quantity $({\rm I})$ defined in \eqref{eq:split-a} satisfies
\begin{equation}\label{eq:bound-I}
\vert  ( {\rm I} ) \vert  \lesssim \Big(\sum_{E \in {\cal T}_h} \eta_{{\rm rhs},1}^2(E) \Big)^{1/2} \vert v \vert_{1,\Omega}\,, 
\end{equation}
with 
\begin{equation}\label{eq:eta-f}
\eta_{{\rm rhs},1}(E) = h_E \Vert f - \Pi_{E,q-1} f \Vert_{0,E} \,.
\end{equation}
\end{lemma}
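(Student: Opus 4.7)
The plan is to rewrite the quantity $({\rm I})$ element-wise, exploit the mean-preserving property \eqref{eq:mean} of the projection to subtract an element-wise constant from $v$, and then use Cauchy--Schwarz together with a Poincaré inequality on each element.

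More precisely, the first step is to observe that by the definitions \eqref{eq:forms F} and \eqref{eq:forms FPi},
\begin{equation*}
({\rm I}) = F(v) - F_\pi(v) = \sum_{E \in \Th} \int_E (f - \Pi_{E,q-1} f)\, v\,.
\end{equation*}
Next, for every $E \in \Th$ let $\bar v_E$ denote the mean value of $v$ on $E$. Property \eqref{eq:mean} applied to $\Pi_{E,q-1}$ (together with the fact that $\bar v_E$ is a constant) gives $\int_E (f - \Pi_{E,q-1} f)\, \bar v_E = 0$, so that
\begin{equation*}
({\rm I}) = \sum_{E \in \Th} \int_E (f - \Pi_{E,q-1} f)(v - \bar v_E)\,.
\end{equation*}

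The third step is a local Cauchy--Schwarz inequality and a Poincaré--Wirtinger inequality on each $E$, which for shape-regular elements yields $\Vert v - \bar v_E \Vert_{0,E} \lesssim h_E \vert v \vert_{1,E}$ with a constant depending only on the shape-regularity of $\Th$. Combining these gives
\begin{equation*}
\Bigl| \int_E (f - \Pi_{E,q-1} f)(v - \bar v_E) \Bigr|
\lesssim h_E \Vert f - \Pi_{E,q-1} f \Vert_{0,E}\, \vert v \vert_{1,E}
= \eta_{{\rm rhs},1}(E)\, \vert v \vert_{1,E}\,.
\end{equation*}
Finally, a discrete Cauchy--Schwarz in the sum over elements, together with $\sum_E \vert v \vert_{1,E}^2 = \vert v \vert_{1,\Omega}^2$, delivers the claim \eqref{eq:bound-I}.

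There is no real obstacle here; the only subtlety worth flagging is that the estimator weight $h_E$ in \eqref{eq:eta-f} is generated precisely by the Poincaré--Wirtinger step, which in turn is made possible by the mean-preserving property \eqref{eq:mean} of $\Pi_{E,q-1}$. Note also that one does not need to exploit the full polynomial accuracy of $\Pi_{E,q-1}$ in this argument: only the degree-zero compatibility encoded in \eqref{eq:mean} is used.
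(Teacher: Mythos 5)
Your proof is correct and follows exactly the paper's argument: subtract the elemental mean $m_E(v)$ using the mean-preserving property \eqref{eq:mean}, apply Cauchy--Schwarz and the Poincar\'e--Wirtinger bound $\Vert v - m_E(v)\Vert_{0,E} \lesssim h_E \vert v \vert_{1,E}$, then sum over elements. The only difference is that you spell out the final discrete Cauchy--Schwarz step, which the paper leaves implicit.
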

\proof Setting $m_E(v)=\frac1{\vert E \vert} \int_E v$ and using \eqref{eq:mean}, we get
$$
( {\rm I} ) = \sum_{E \in {\cal T}_h} \int_E \left( f - \Pi_{E,q-1} f \right)(v-m_E(v) )  \,,
$$
and we conclude using the bound $\Vert v - m_E(v) \Vert_{0,E} \lesssim h_E \vert v \vert_{1,E}$. \endproof

\begin{lemma}\label{lem:bound-III}
The quantity $({\rm II})$ defined in \eqref{eq:split-a} satisfies
\begin{equation}\label{eq:bound-III}
\vert  ( {\rm II} )  \vert  \lesssim \Big( \sum_{E \in {\cal T}_h}  \big( \eta_{{\rm coef},1}^2(E) + \eta_{{\rm coef},2}^2(E) + \eta_{{\rm coef},3}^2(E) \big)
 \Big)^{1/2} \vert v \vert_{1,\Omega}\,, 
\end{equation}
with
\begin{equation}\label{eq:eta-coef-13}
\begin{split}
\eta_{{\rm coef},1}(E)  &= \Vert  \mu \nabla u^\NN - \Pi_{E,q} (\mu \nabla u^\NN) \Vert_{0,E} \,, \\[3pt]
\eta_{{\rm coef},2}(E) &= h_E \Vert  \boldsymbol{\beta}\cdot \nabla u^\NN - \Pi_{E,q-1}( \boldsymbol{\beta}\cdot \nabla u^\NN)  \Vert_{0,E} \,, \\[3pt]
\eta_{{\rm coef},3}(E) &= h_E \Vert  \sigma u^\NN - \Pi_{E,q-1}( \sigma u^\NN)  \Vert_{0,E} \,.
\end{split}
\end{equation}
\end{lemma}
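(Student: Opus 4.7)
The plan is to expand the difference $(\mathrm{II})=a_\pi(u^\NN,v)-a(u^\NN,v)$ elementwise using the definitions \eqref{eq:form a} and \eqref{eq:form aPi}, obtaining
\[
(\mathrm{II})=\sum_{E\in\Th}\int_E \bigl[\Pi_{E,q}(\mu\nabla u^\NN)-\mu\nabla u^\NN\bigr]\cdot\nabla v
+ \sum_{E\in\Th}\int_E \bigl[\Pi_{E,q-1}(\boldsymbol\beta\cdot\nabla u^\NN+\sigma u^\NN)-(\boldsymbol\beta\cdot\nabla u^\NN+\sigma u^\NN)\bigr]\,v\,.
\]
Each indicator corresponds to one piece of this expansion, so the whole proof is a routine elementwise Cauchy--Schwarz argument, modulo one trick for the lower-order terms.

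For the diffusion contribution, I would just apply Cauchy--Schwarz on each $E$ to recover directly $\eta_{\mathrm{coef},1}(E)\,|v|_{1,E}$, with no need for the scaling factor $h_E$. The tricky part is the convection and reaction contributions: if one blindly applied Cauchy--Schwarz one would get a bound of the form $\Vert\cdot\Vert_{0,E}\,\Vert v\Vert_{0,E}$, which is not scaled correctly to produce the factor $h_E$ appearing in $\eta_{\mathrm{coef},2}$ and $\eta_{\mathrm{coef},3}$. The key observation, and what makes the choice of projection operator in \eqref{eq:mean} crucial, is that $\int_E(\varphi-\Pi_{E,q-1}\varphi)=0$ for every $\varphi\in L^2(E)$. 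Consequently, for the constant $m_E(v)=\frac{1}{|E|}\int_E v$, the integrals
\[
\int_E\bigl[\Pi_{E,q-1}(\boldsymbol\beta\cdot\nabla u^\NN)-\boldsymbol\beta\cdot\nabla u^\NN\bigr]\,m_E(v) \quad\text{and}\quad \int_E\bigl[\Pi_{E,q-1}(\sigma u^\NN)-\sigma u^\NN\bigr]\,m_E(v)
\]
vanish; thus $v$ may be replaced by $v-m_E(v)$ inside those integrals. Then Cauchy--Schwarz together with the Poincaré-type inequality $\Vert v-m_E(v)\Vert_{0,E}\lesssim h_E\,|v|_{1,E}$ (already used in the proof of Lemma~\ref{lem:bound-I}) produces exactly the $h_E$-weighted estimators $\eta_{\mathrm{coef},2}(E)\,|v|_{1,E}$ and $\eta_{\mathrm{coef},3}(E)\,|v|_{1,E}$.

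To conclude, I would sum these three elementwise bounds, apply the discrete Cauchy--Schwarz inequality in the index $E$, and use the identity $\sum_{E}|v|_{1,E}^2=|v|_{1,\Omega}^2$, together with the trivial inequality $(a+b+c)^2\le 3(a^2+b^2+c^2)$ to combine the three families of indicators into a single square root. This yields \eqref{eq:bound-III} with a constant absorbed into the $\lesssim$ sign. The only real obstacle in the argument is the mean-preservation trick described above; once one recognizes why the projection operator was required to satisfy \eqref{eq:mean}, the remainder is purely computational.
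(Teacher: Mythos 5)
Your argument is correct and coincides with the paper's own proof: the authors likewise split $(\mathrm{II})$ elementwise, use the mean-preservation property \eqref{eq:mean} to replace $v$ by $v-m_E(v)$ in the convection and reaction integrals, and then conclude via Cauchy--Schwarz and the Poincar\'e-type bound $\Vert v-m_E(v)\Vert_{0,E}\lesssim h_E\,\vert v\vert_{1,E}$ exactly as in Lemma~\ref{lem:bound-I}. Nothing to add.
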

\proof It holds
\begin{equation*}
\begin{split}
({\rm II}) &=  \sum_{E \in {\cal T}_h} \int_E \Big( \mu \nabla u^\NN - \Pi_{E,q}(\mu \nabla u^\NN) \Big) \cdot \nabla v \\
& \quad + \sum_{E \in {\cal T}_h} \int_E \Big(  \boldsymbol{\beta}\cdot \nabla u^\NN - \Pi_{E,q-1}( \boldsymbol{\beta}\cdot \nabla u^\NN)  \Big) (v - m_E(v)) \\
& \quad + \sum_{E \in {\cal T}_h} \int_E \Big( \sigma u^\NN - \Pi_{E,q-1}( \sigma u^\NN)  \Big) (v - m_E(v)) \,,
\end{split}
\end{equation*}
where we have used again \eqref{eq:mean}. We conclude as in the proof of Lemma \ref{lem:bound-I}.\endproof

Let us now focus on the quantity  $({\rm III})$, which can be written as
\begin{equation}\label{eq:split-III}
({\rm III}) = \underbrace{F_\pi(v-v_h) - a_\pi(u^\NN,v-v_h)}_{(\text{IV})}  +  \underbrace{F_\pi(v_h) - a_\pi(u^\NN,v_h)}_{(\text{V})} \,, \qquad \forall v_h \in V_h\,;  
\end{equation}
in turn, the quantity  $({\rm V})$ can be written as
\begin{equation}\label{eq:split-V}
({\rm V}) = \underbrace{F_\pi(v_h) -F_h(v_h)}_{(\text{VII})} + \underbrace{F_h(v_h)-a_h(u^\NN,v_h)}_{(\text{VI})} + \underbrace{a_h(u^\NN,v_h)-a_\pi(u^\NN,v_h)}_{(\text{VIII})} \,. 
\end{equation}

The bound of $({\rm IV})$ is standard in finite-element a posteriori error analysis: it involves the local bulk residuals

\begin{equation}\label{eq:def-bulk}
{\rm bulk}_E(u^\NN) = \Pi_{E,q-1}f +\nabla \cdot \Pi_{E,q} (\mu \nabla u^\NN) - \Pi_{E,q-1}( \boldsymbol{\beta}\cdot \nabla u^\NN + \sigma u^\NN)
\end{equation}
and the interelement jumps at each edge $e$ shared by two elements, say $E_1$ and $E_2$ with opposite normal unit vectors $\boldsymbol{n}_1$ and $\boldsymbol{n}_2$, namely

\begin{equation}\label{eq:def-jump}
{\rm jump}_e(u^\NN) = \Pi_{E_1,q}(\mu \nabla u^\NN)\cdot \boldsymbol{n}_1 + \Pi_{E_2,q}(\mu \nabla u^\NN)\cdot \boldsymbol{n}_2
\,;
\end{equation}
in addition, one defines ${\rm jump}(u^\NN, e) =0$ if $e \subset \partial \Omega$. 

To derive the bound, the test function $v_h$ in \eqref{eq:split-III} is chosen as $v_h=I_h^C v$, the Cl\'ement interpolant of $v$ on $\Th$ \cite{clement1975}, which satisfies 
\begin{equation}\label{eq:clement}
\Vert v-I_h^C v \Vert_{k,E} \lesssim h_E^k \vert v \vert_{1, D_E}, \qquad k=0,1 \,,
\end{equation}
where $D_E = \cup \{E' \in \Th : E \cap E' \not= \emptyset\}$. 

\begin{lemma}\label{lem:bound-IV}
The quantity $({\rm IV})$ defined in \eqref{eq:split-III} satisfies
\begin{equation}\label{eq:bound-IV}
\vert  ( {\rm IV} ) \vert  \lesssim \Big(\sum_{E \in {\cal T}_h} \eta_{{\rm res}}^2(E) \Big)^{1/2} \vert v \vert_{1,\Omega}\,, 
\end{equation}
where 
\begin{equation}\label{eq:eta-res}
\eta_{{\rm res}}(E) = h_E \Vert \, {\rm bulk}_E(u^\NN)  \, \Vert_{0,E}  +  h_E^{1/2} \sum_{e \subset \partial E} \Vert \,{\rm jump}_e(u^\NN)   \, \Vert_{0,e} \,,
\end{equation}
with ${\rm bulk}_E(u^\NN)$ defined in \eqref{eq:def-bulk} and ${\rm jump}_e(u^\NN)$ defined in \eqref{eq:def-jump}. 
\end{lemma}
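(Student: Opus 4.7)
The plan is to apply the classical residual a posteriori machinery, element by element, to the modified data $(\Pi_{E,q-1}f, \Pi_{E,q}(\mu\nabla u^\NN), \Pi_{E,q-1}(\boldsymbol{\beta}\cdot\nabla u^\NN+\sigma u^\NN))$ in place of the raw data. Because these projected quantities are polynomials on each $E$, every integration-by-parts step is legitimate and produces exactly the bulk and jump terms defined in \eqref{eq:def-bulk}--\eqref{eq:def-jump}.

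First I would rewrite $({\rm IV})$ by expanding the definitions \eqref{eq:form aPi}--\eqref{eq:forms FPi}, obtaining a sum over $E\in\Th$ of
\[
\int_E (\Pi_{E,q-1}f)(v-v_h) - \int_E \Pi_{E,q}(\mu\nabla u^\NN)\cdot\nabla(v-v_h) - \int_E \Pi_{E,q-1}(\boldsymbol{\beta}\cdot\nabla u^\NN+\sigma u^\NN)(v-v_h).
\]
Next I would integrate by parts the diffusive contribution on each element. Since $\Pi_{E,q}(\mu\nabla u^\NN)\in(\mathbb{P}_q(E))^n$, its divergence is a polynomial and there is no distributional trouble. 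Summing over elements, the volume contribution collects into $\int_E {\rm bulk}_E(u^\NN)(v-v_h)$, while the boundary contributions on interior edges pair up (adjacent elements share $e$ with opposite normals) to give $-\int_e {\rm jump}_e(u^\NN)(v-v_h)$, with the convention ${\rm jump}_e(u^\NN)=0$ on $\partial\Omega$; this vanishing on the boundary is consistent with $v,v_h\in V=\Hr^1_0(\Omega)$ so that $v-v_h=0$ on $\partial\Omega$ anyway.

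At this point I would apply Cauchy--Schwarz element-by-element and edge-by-edge, then invoke the Cl\'ement bounds \eqref{eq:clement}: the $L^2$-estimate on $E$ to control $\|v-v_h\|_{0,E}\lesssim h_E|v|_{1,D_E}$, and the trace-scaled version $\|v-v_h\|_{0,e}\lesssim h_e^{1/2}|v|_{1,D_e}$ (which follows from \eqref{eq:clement} combined with a standard scaled trace inequality) to control the edge terms. This yields, for each $E$,
\[
h_E \,\|{\rm bulk}_E(u^\NN)\|_{0,E}\,|v|_{1,D_E} + h_E^{1/2}\sum_{e\subset\partial E}\|{\rm jump}_e(u^\NN)\|_{0,e}\,|v|_{1,D_e},
\]
which is exactly $\eta_{\rm res}(E)$ times a local seminorm of $v$ (up to the usual splitting of $h_e\simeq h_E$ from shape-regularity).

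Finally I would sum over $E$, apply discrete Cauchy--Schwarz, and use the finite-overlap property of the patches $\{D_E\}$ afforded by shape-regularity of $\Th$ to replace $\sum_E |v|_{1,D_E}^2$ by a constant multiple of $|v|_{1,\Omega}^2$. This yields \eqref{eq:bound-IV}. I do not expect any serious obstacle: the only care needed is the bookkeeping of the two projection degrees $q$ and $q-1$ (they are what makes the divergence of the projected flux sit in the same polynomial space as the projected lower-order terms, so that ${\rm bulk}_E(u^\NN)$ is a single well-defined polynomial), and the standard derivation of the $L^2(e)$ Cl\'ement estimate from \eqref{eq:clement}.
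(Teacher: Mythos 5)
Your argument is correct and is precisely the standard residual-based derivation (element-wise integration by parts of the projected flux, pairing of edge contributions into jumps, Cauchy--Schwarz, Cl\'ement estimates, finite overlap of patches) that the paper delegates to Verf\"urth's monograph without writing out. The only point worth the care you already flagged --- that the projected data are elementwise polynomials, so the integration by parts and the definition of ${\rm bulk}_E(u^\NN)$ are unambiguous --- is handled correctly.
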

\begin{proof} We refer e.g. to \cite{verfurth1996} for more details. 
\end{proof}  

Before considering the  quantity $({\rm VI})$, let us state a useful result of equivalence of norms.
\begin{lemma}\label{lem:equi-norm}
For any $v_h =  \sum_{i \in I_h} v_i \varphi_i \in V_h$, let $\boldsymbol{v} = (v_i)_{i \in I_h}$ be the vector of its coefficients. There exist 
constants $0< c_h \leq C_h$, possibly depending on $h$ such that
\begin{equation}\label{eq:norm-equiv-Vh}
c_h \vert v_h \vert_{1, \Omega} \leq \Vert \boldsymbol{v} \Vert_2 \leq C_h \vert v_h \vert_{1, \Omega} \qquad \forall v_h \in V_h \,,
\end{equation}
where $\Vert \boldsymbol{v} \Vert_2 = \left( \sum_{i \in I_h} v_i^2 \right)^{1/2}$.
\end{lemma}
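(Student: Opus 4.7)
The plan is to exploit the fact that $V_h$ is finite dimensional, so all norms on it are equivalent; the only preliminary task is to verify that the two quantities in question really are norms on $V_h$.

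First I would observe that the map $v_h \mapsto \Vert \boldsymbol{v} \Vert_2$ is well defined, since $\{\varphi_i : i \in I_h\}$ is a basis of $V_h$ and thus the coefficient vector $\boldsymbol{v} = (v_i)$ is uniquely determined by $v_h$; homogeneity, positivity and the triangle inequality then follow immediately, so this map is indeed a norm on $V_h$. Second, I would note that on $V_h \subset V = \Hr^1_0(\Omega)$ the $\Hr^1$-seminorm $\vert \cdot \vert_{1,\Omega}$ is a full norm, by the Poincar\'e--Friedrichs inequality applied to functions vanishing on $\Gamma$.

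Since $\dim V_h = |I_h| < \infty$, the two norms just identified must be equivalent, which yields the existence of constants $0 < c_h \leq C_h$ such that \eqref{eq:norm-equiv-Vh} holds. If one wants a quantitative statement, the usual scaling argument on a shape-regular triangulation gives $\Vert v_h \Vert_{0,\Omega}^2 \sim h^n \Vert \boldsymbol{v} \Vert_2^2$ for the Lagrange basis, so combining with the Poincar\'e inequality and the standard inverse inequality $\vert v_h \vert_{1,\Omega} \lesssim h^{-1} \Vert v_h \Vert_{0,\Omega}$ one may take $c_h \sim h^{n/2}$ and $C_h \sim h^{n/2-1}$ (up to constants depending only on the shape-regularity of $\Th$ and on $\Omega$).

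There is no real obstacle here: the content of the lemma is purely the finite dimensionality of $V_h$, and no assertion is made on how the constants $c_h, C_h$ behave in $h$. The only mild care needed is to invoke Poincar\'e so that $\vert \cdot \vert_{1,\Omega}$ qualifies as a norm rather than a mere seminorm on $V_h$, which is why homogeneous Dirichlet boundary conditions (built into $V$) play an implicit role.
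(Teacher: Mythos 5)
Your main argument is exactly the paper's: both quantities are norms on the finite-dimensional space $V_h$ (with Poincar\'e making the $\Hr^1$-seminorm a genuine norm on $\Hr^1_0$), hence they are equivalent with $h$-dependent constants, which is all the lemma asserts. However, the quantitative aside is wrong: you have computed the constants for the reversed inequality $c_h \Vert \boldsymbol{v}\Vert_2 \leq \vert v_h\vert_{1,\Omega} \leq C_h \Vert \boldsymbol{v}\Vert_2$ rather than for \eqref{eq:norm-equiv-Vh} as stated. From $\Vert v_h\Vert_{0,\Omega}^2 \simeq h^n \Vert\boldsymbol{v}\Vert_2^2$ one gets $\Vert\boldsymbol{v}\Vert_2 \simeq h^{-n/2}\Vert v_h\Vert_{0,\Omega} \lesssim h^{-n/2}\vert v_h\vert_{1,\Omega}$ by Poincar\'e, so $C_h \simeq h^{-n/2}$, and the inverse inequality gives $\vert v_h\vert_{1,\Omega} \lesssim h^{n/2-1}\Vert\boldsymbol{v}\Vert_2$, so $c_h \simeq h^{1-n/2}$ --- these are the values the paper records. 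Your proposed $C_h \sim h^{n/2-1}$ is not merely non-sharp but false for $n\geq 2$ (for $n=2$ it is $O(1)$, whereas low-frequency test functions force $C_h \gtrsim h^{-1}$); since $C_h$ reappears in the definition of $\eta_{\rm loss}$, getting its asymptotics right is not purely cosmetic.
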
 
\begin{proof} The result expresses the equivalence of norms in finite dimensional spaces. If the triangulation ${\cal T}_h$ is quasi uniform, then one can prove by a standard reference-element argument that $c_h \simeq h^{1-n/2}$ whereas $C_h \simeq h^{-n/2}$.
\end{proof}

We are now able to bound the quantity $({\rm VI})$ in terms of the loss function introduced in \eqref{eq:loss-function}, as follows. 
\begin{lemma}\label{lem:bound-VI}
The quantity $({\rm VI})$ defined in \eqref{eq:split-V} satisfies
\begin{equation}\label{eq:bound-VI}
\vert  ( {\rm VI} ) \vert  \lesssim \eta_{{\rm loss}} \vert v \vert_{1,\Omega}\,, 
\end{equation}
where
\begin{equation}
\eta_{{\rm loss}} = C_h R_h(u^\NN)
\end{equation}
and the constant $C_h$ is defined in  \eqref{eq:norm-equiv-Vh}.  
\end{lemma}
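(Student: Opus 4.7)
The plan is to express $({\rm VI})$ as an inner product between the residual vector and the coefficient vector of $v_h=I_h^C v$, then apply Cauchy–Schwarz, Lemma~\ref{lem:equi-norm}, and the stability of the Clément interpolant.

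First I would expand $v_h \in V_h$ in the basis, writing $v_h = \sum_{i \in I_h} v_i \varphi_i$ with coefficient vector $\boldsymbol{v} = (v_i)_{i \in I_h}$. By linearity of $F_h$ and of $a_h(u^\NN,\cdot)$ in the second argument, and recalling the definition \eqref{eq:residuals} of the local residuals, one immediately obtains
\begin{equation*}
({\rm VI}) = F_h(v_h) - a_h(u^\NN,v_h) = \sum_{i \in I_h} v_i \bigl( F_h(\varphi_i) - a_h(u^\NN,\varphi_i) \bigr) = \sum_{i \in I_h} v_i \, r_{h,i}(u^\NN) \,.
\end{equation*}

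Next I would apply the Cauchy–Schwarz inequality in $\mathbb{R}^{|I_h|}$ together with the definition \eqref{eq:loss-function} of the loss function to get
\begin{equation*}
|({\rm VI})| \leq \Vert \boldsymbol{v} \Vert_2 \, \Bigl( \sum_{i \in I_h} r_{h,i}^2(u^\NN) \Bigr)^{1/2} = \Vert \boldsymbol{v} \Vert_2 \, R_h(u^\NN) \,.
\end{equation*}
Then, invoking the right-hand inequality in \eqref{eq:norm-equiv-Vh} from Lemma~\ref{lem:equi-norm}, one bounds $\Vert \boldsymbol{v} \Vert_2 \leq C_h |v_h|_{1,\Omega}$. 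Finally, since $v_h = I_h^C v$ is the Clément interpolant of $v$, the well-known $H^1$-stability of $I_h^C$ (a consequence of \eqref{eq:clement} with $k=1$, summed over the elements and using shape regularity to control the overlap of the patches $D_E$) yields $|v_h|_{1,\Omega} \lesssim |v|_{1,\Omega}$. Chaining these three estimates produces exactly \eqref{eq:bound-VI} with $\eta_{{\rm loss}} = C_h R_h(u^\NN)$.

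This proof is essentially mechanical; there is no real obstacle. The only subtle point worth highlighting is that the constant $C_h$ is inherently $h$-dependent (behaving like $h^{-n/2}$ for quasi-uniform meshes), so the bound degrades as $h \to 0$; this reflects the mismatch between the $\ell^2$-norm of nodal residuals and the $H^{-1}$-norm naturally dual to $|\cdot|_{1,\Omega}$, but since the loss function $R_h(u^\NN)$ typically is driven to very small values by the training, this is acceptable in practice and the form of $\eta_{{\rm loss}}$ is the most direct computable quantity one can extract from the loss.
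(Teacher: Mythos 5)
Your proof is correct and follows essentially the same route as the paper's: expand $v_h$ in the basis to write $({\rm VI})=\sum_i r_{h,i}(u^\NN)v_i$, apply Cauchy--Schwarz, use the right-hand inequality of \eqref{eq:norm-equiv-Vh}, and conclude with the $H^1$-stability of the Cl\'ement interpolant from \eqref{eq:clement}. No gaps; your remark on the $h$-dependence of $C_h$ is consistent with the paper's own discussion in Lemma~\ref{lem:equi-norm}.
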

\begin{proof}
Writing $v_h =  \sum_{i \in I_h} v_i \varphi_i$, it holds
$$
({\rm VI}) = \sum_{i \in I_h} r_{h,i}(u^\NN) v_i \,,
$$
whence
$$
\vert ({\rm VI}) \vert \lesssim R_h(u^\NN) \Vert \boldsymbol{v} \Vert_2 \,,
$$
We conclude by using \eqref{eq:norm-equiv-Vh} and observing that  
\begin{equation}\label{eq:clem-bound}
\vert v_h \vert_{1, \Omega} \lesssim  \vert v \vert_{1, \Omega} \,,
\end{equation}
since we have chosen $v_h=I_h^C v$ and \eqref{eq:clement} holds.
\end{proof}

We are left with the problem of bounding the terms $({\rm VII})$ and $({\rm VIII})$ in \eqref{eq:split-V}. They are similar to the terms $({\rm I})$ and $({\rm II})$, respectively, but reflect the presence of the quadrature formula introduced in \eqref{eq:def-ah} and \eqref{eq:def-Fh}.
In the forthcoming analysis, it will be useful to introduce the following notation for the quadrature-based discrete (semi-)norm on $C^0 (E)$:
\begin{equation}\label{eq:discr-norm}
\Vert \varphi \Vert_{0, E, \omega}= \left(\sum_{\iota \in I^E} \varphi^2(\xi^E_\iota) \,\omega^E_\iota \right)^{1/2} \,.
\end{equation}

Let us start with the quantity $({\rm VII})$. Recalling that the adopted quadrature rule has precision $q$ and test functions $v_h$ are piecewise linear polynomials, it holds
\begin{equation}\label{eq:split-VII}
\begin{split}
({\rm VII}) &= \sum_{E \in {\cal T}_h} \left( \int_E  (\Pi_{E,q-1} f) v_h - \sum_{\iota \in I^E} f(\xi^E_\iota) v_h(\xi^E_\iota) \,\omega^E_\iota \right) \\
&= \sum_{E \in {\cal T}_h} \left( \sum_{\iota \in I^E}  (\Pi_{E,q-1} f - f) (\xi^E_\iota) v_h(\xi^E_\iota) \,\omega^E_\iota \right) \\
&= \underbrace{\sum_{E \in {\cal T}_h} \left( \sum_{\iota \in I^E}  (\Pi_{E,q-1} f - f) (\xi^E_\iota) (v_h - m_E(v_h))(\xi^E_\iota) \,\omega^E_\iota \right)}_{(\text{VIIa})}  \\
& \qquad + \underbrace{\sum_{E \in {\cal T}_h} \left( \sum_{\iota \in I^E}  (\Pi_{E,q-1} f - f) (\xi^E_\iota) \,\omega^E_\iota  m_E(v_h) \right)}_{(\text{VIIb})} \,.
\end{split}
\end{equation}
On the one hand, recalling the assumption $q \geq 2$ and inequality \eqref{eq:clem-bound} one has
\begin{equation}
\begin{split}
\vert ({\rm VIIa}) \vert & \leq \sum_{E \in {\cal T}_h} \Vert f-\Pi_{E,q-1} f \Vert_{0, E, \omega} \Vert v_h - m_E(v_h) \Vert_{0, E, \omega} \\
& = \sum_{E \in {\cal T}_h} \Vert f-\Pi_{E,q-1} f \Vert_{0, E, \omega} \Vert v_h - m_E(v_h) \Vert_{0, E} \\
&  \lesssim \sum_{E \in {\cal T}_h}  h_E \Vert f-\Pi_{E,q-1} f \Vert_{0, E, \omega} \vert v_h \vert_{1,E} \\
& \lesssim \left( \sum_{E \in {\cal T}_h}  h_E^2 \Vert f-\Pi_{E,q-1} f \Vert_{0, E, \omega}^2 \right)^{1/2}  \vert v \vert_{1,\Omega} \,.
 \end{split}
\end{equation}
On the other hand, we first observe that, by the exactness of the quadrature rule and \eqref{eq:mean}, we get
$$
\sum_{\iota \in I^E}  (\Pi_{E,q-1} f )(\xi^E_\iota) \,\omega^E_\iota = \int_E \Pi_{E,q-1} f = \int_E f = \int_E \Pi_{E,q} f  = 
\sum_{\iota \in I^E}  (\Pi_{E,q} f )(\xi^E_\iota) \,\omega^E_\iota.
$$
Hence,
\begin{equation}
\begin{split}
\vert ({\rm VIIb}) \vert & \leq \sum_{E \in {\cal T}_h} \Vert f-\Pi_{E,q} f \Vert_{0, E, \omega} \Vert m_E(v_h) \Vert_{0, E} \\
&  \leq \sum_{E \in {\cal T}_h}  \Vert f-\Pi_{E,q} f \Vert_{0, E, \omega} \Vert v_h \Vert_{0,E} \\
& \lesssim \left( \sum_{E \in {\cal T}_h}  \Vert f-\Pi_{E,q} f \Vert_{0, E, \omega}^2 \right)^{1/2}  \vert v \vert_{1,\Omega} \,.
 \end{split}
\end{equation}
Summarizing, we obtain the following result, which is anologous to that in Lemma \ref{lem:bound-I}.
\begin{lemma}\label{lem:bound-VII}
The quantity $({\rm VII})$ defined in \eqref{eq:split-V} satisfies
\begin{equation}\label{eq:bound-VII}
\vert  ( {\rm VII} ) \vert  \lesssim \Big(\sum_{E \in {\cal T}_h} \eta_{{\rm rhs},2}^2(E) \Big)^{1/2} \vert v \vert_{1,\Omega}\,, 
\end{equation}
with 
\begin{equation}\label{eq:eta-f-2}
\eta_{{\rm rhs},2}(E) = h_E \Vert f - \Pi_{E,q-1} f \Vert_{0,E,\omega} + \Vert f - \Pi_{E,q} f \Vert_{0,E,\omega} \,.
\end{equation}
\end{lemma}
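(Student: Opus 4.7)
The plan is to decompose $({\rm VII})$ elementwise via $v_h = (v_h - m_E(v_h)) + m_E(v_h)$ into an oscillatory part and a constant-mean part. This decomposition is the natural one because the first term in $\eta_{{\rm rhs},2}(E)$ carries an $h_E$ factor, which can only arise from a local Poincar\'e-type estimate on $v_h - m_E(v_h)$, whereas the second term is $h_E$-free and therefore must come from exploiting the conservative property \eqref{eq:mean} on the constant-mean piece.

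For the oscillatory contribution, I would apply a discrete Cauchy--Schwarz in $\Vert\cdot\Vert_{0,E,\omega}$, use that $v_h - m_E(v_h)$ is linear so (by the assumed precision $q\geq 2$, which makes the quadrature exact on squares of linears) the discrete and continuous $L^2$ norms coincide on it, then invoke $\Vert v_h - m_E(v_h)\Vert_{0,E} \lesssim h_E \vert v_h\vert_{1,E}$, sum over $E$ and use \eqref{eq:clem-bound}. This reproduces exactly the first piece of $\eta_{{\rm rhs},2}(E)$.

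The main obstacle is the constant-mean contribution, for which a direct Cauchy--Schwarz would only yield a bound with $\Vert f - \Pi_{E,q-1}f\Vert_{0,E,\omega}$ and no $h_E$ factor, i.e. a contribution too large to absorb. The key trick is to note that, by exactness of the quadrature on $\mathbb{P}_q$ and by \eqref{eq:mean} applied to both $\Pi_{E,q-1}f$ and $\Pi_{E,q}f$, one has
\[
\sum_{\iota \in I^E} (\Pi_{E,q-1}f)(\xi^E_\iota)\,\omega^E_\iota \;=\; \int_E f \;=\; \sum_{\iota \in I^E} (\Pi_{E,q}f)(\xi^E_\iota)\,\omega^E_\iota,
\]
so inside the constant-mean term one may silently replace $\Pi_{E,q-1}f$ by $\Pi_{E,q}f$, upgrading the approximation order of the projection. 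A Cauchy--Schwarz, the trivial bound $\Vert m_E(v_h)\Vert_{0,E}\leq \Vert v_h\Vert_{0,E}$, and Poincar\'e on $\Hr^1_0(\Omega)$ together with \eqref{eq:clem-bound}, then produce the second piece of $\eta_{{\rm rhs},2}(E)$. Summing the two contributions and applying $(a+b)^2\leq 2(a^2+b^2)$ yields the claim.
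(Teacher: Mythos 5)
Your proposal is correct and follows essentially the same route as the paper: the same splitting of $v_h$ into $v_h-m_E(v_h)$ plus its mean, the same use of quadrature exactness to identify discrete and continuous norms on the linear part, and the same key identity $\sum_{\iota}(\Pi_{E,q-1}f)(\xi^E_\iota)\,\omega^E_\iota=\int_E f=\sum_{\iota}(\Pi_{E,q}f)(\xi^E_\iota)\,\omega^E_\iota$ to upgrade the projection order in the constant-mean term. Nothing is missing.
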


The last term in \eqref{eq:split-V}, $({\rm VIII})$, can be written as
\begin{equation}\label{eq:split-VIII}
\begin{split}
({\rm VIII}) &= \underbrace{\sum_{E \in {\cal T}_h} \left( \sum_{\iota \in I^E} (\mu \nabla u^\NN)(\xi^E_\iota) \cdot \nabla v_h \,\omega^E_\iota  -  \int_E  \Pi_{E,q}(\mu \nabla u^\NN)\cdot \nabla v_h  \right)}_{(\text{VIIIa})} \\
& \ \ + \underbrace{\sum_{E \in {\cal T}_h} \left( \sum_{\iota \in I^E} ( \boldsymbol{\beta}\cdot \nabla u^\NN)(\xi^E_\iota) \, v_h(\xi^E_\iota) \,\omega^E_\iota - \int_E  \Pi_{E,q-1}(\boldsymbol{\beta}\cdot \nabla u^\NN) \, v_h \right)}_{(\text{VIIIb})} \\
& \ \ + \underbrace{\sum_{E \in {\cal T}_h} \left( \sum_{\iota \in I^E} (\sigma u^\NN)(\xi^E_\iota)\,  v_h (\xi^E_\iota) \,\omega^E_\iota - \int_E  \Pi_{E,q-1}(\sigma u^\NN) \, v_h \right)}_{(\text{VIIIc})} \,.
\end{split}
\end{equation}
Concerning $(\text{VIIIa})$, by the exactness of the quadrature rule and the fact that $\nabla v_h$ is piecewise constant, one has
$$
(\text{VIIIa}) = \sum_{E \in {\cal T}_h}   \sum_{\iota \in I^E} \big(\mu \nabla u^\NN - \Pi_{E,q}(\mu \nabla u^\NN)\big)(\xi^E_\iota) \cdot \nabla v_h \,\omega^E_\iota \,,
$$
which easily gives
$$
\vert  ( {\rm VIIIa} ) \vert  \lesssim \left( \sum_{E \in {\cal T}_h}  \Vert \mu \nabla u^\NN - \Pi_{E,q}(\mu \nabla u^\NN) \Vert_{0,E,\omega}^2 \right)^{1/2} \vert v \vert_{1,\Omega} \,.
$$
The terms $(\text{VIIIb})$ and $(\text{VIIIc})$ are similar to the term $(\text{VII})$ above, in which $f$ is replaced by $\boldsymbol{\beta}\cdot \nabla u^\NN$ and $\sigma u^\NN$, respectively. Hence, they can be bounded as done for $(\text{VII})$. Summarizing, we obtain the following result, which is anologous to that in Lemma \ref{lem:bound-III}.
\begin{lemma}\label{lem:bound-VIII}
The quantity $({\rm VIII})$ defined in \eqref{eq:split-V} satisfies
\begin{equation}\label{eq:bound-VIII}
\vert  ( {\rm VIII} )  \vert  \lesssim \Big( \sum_{E \in {\cal T}_h}  \big( \eta_{{\rm coef},4}^2(E) + \eta_{{\rm coef},5}^2(E) + \eta_{{\rm coef},6}^2(E) \big)
 \Big)^{1/2} \vert v \vert_{1,\Omega}\,, 
\end{equation}
with
\begin{equation}\label{eq:eta-coef-46}
\begin{split}
\eta_{{\rm coef},4}(E)  &= \Vert  \mu \nabla u^\NN - \Pi_{E,q} (\mu \nabla u^\NN) \Vert_{0,E,\omega} \,, \\[3pt]
\eta_{{\rm coef},5}(E) &= h_E \Vert  \boldsymbol{\beta}\cdot \nabla u^\NN - \Pi_{E,q-1}( \boldsymbol{\beta}\cdot \nabla u^\NN)  \Vert_{0,E,\omega} \,, \\[3pt]
& \qquad \qquad \qquad + \Vert  \boldsymbol{\beta}\cdot \nabla u^\NN - \Pi_{E,q}( \boldsymbol{\beta}\cdot \nabla u^\NN)  \Vert_{0,E,\omega} \\[3pt]
\eta_{{\rm coef},6}(E)  &= h_E \Vert  \sigma u^\NN - \Pi_{E,q-1}( \sigma u^\NN)  \Vert_{0,E,\omega} \\[3pt]
& \qquad \qquad \qquad + \Vert  \sigma u^\NN - \Pi_{E,q}( \sigma u^\NN)  \Vert_{0,E,\omega}\,.
\end{split}
\end{equation}
\end{lemma}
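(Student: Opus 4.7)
The plan is to reuse the decomposition of $(\text{VIII})$ already written as $(\text{VIIIa})+(\text{VIIIb})+(\text{VIIIc})$ in \eqref{eq:split-VIII} and to bound the three pieces separately, mimicking the bookkeeping that worked for $(\text{VII})$. Throughout, I would choose $v_h = I_h^C v$ so that \eqref{eq:clem-bound} gives $\vert v_h \vert_{1,\Omega} \lesssim \vert v \vert_{1,\Omega}$, and I would exploit two features of the quadrature rule: it has precision $q \geq 2$, and it is exact on polynomials of degree $q$ (used repeatedly through the projectors $\Pi_{E,q}$ and $\Pi_{E,q-1}$).

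For $(\text{VIIIa})$ I would argue as the paper already sketches: since $\nabla v_h$ is piecewise constant and $\Pi_{E,q}(\mu\nabla u^\NN)\cdot\nabla v_h$ is piecewise polynomial of degree $\le q$, quadrature exactness lets me replace the integral by the sum, yielding
\[
(\text{VIIIa}) = \sum_{E\in\Th}\sum_{\iota\in I^E}\bigl(\mu\nabla u^\NN - \Pi_{E,q}(\mu\nabla u^\NN)\bigr)(\xi^E_\iota)\cdot\nabla v_h\,\omega^E_\iota.
\]
A discrete Cauchy--Schwarz in $\iota$, followed by the identity $\Vert\nabla v_h\Vert_{0,E,\omega}=\Vert\nabla v_h\Vert_{0,E}$ (valid because $|\nabla v_h|^2$ is piecewise constant, hence integrated exactly), delivers the $\eta_{\text{coef},4}(E)$ contribution.

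For $(\text{VIIIb})$ and $(\text{VIIIc})$ I would run the same two-step splitting used in the proof of Lemma~\ref{lem:bound-VII}. Since $\Pi_{E,q-1}(\boldsymbol{\beta}\cdot\nabla u^\NN)\,v_h$ is of degree $\le q$, quadrature exactness rewrites, for instance,
\[
(\text{VIIIb}) = \sum_{E\in\Th}\sum_{\iota\in I^E}\bigl(\boldsymbol{\beta}\cdot\nabla u^\NN - \Pi_{E,q-1}(\boldsymbol{\beta}\cdot\nabla u^\NN)\bigr)(\xi^E_\iota)\,v_h(\xi^E_\iota)\,\omega^E_\iota,
\]
and then I split $v_h = (v_h-m_E(v_h)) + m_E(v_h)$ exactly as in \eqref{eq:split-VII}. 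The first summand is handled by discrete Cauchy--Schwarz together with $\Vert v_h - m_E(v_h)\Vert_{0,E,\omega}=\Vert v_h - m_E(v_h)\Vert_{0,E}\lesssim h_E|v_h|_{1,E}$ (again exact because $(v_h-m_E(v_h))^2$ has degree $2\le q$), producing the $h_E\Vert\cdot\Vert_{0,E,\omega}$ piece of $\eta_{\text{coef},5}(E)$. For the second summand I recycle the identity used for $(\text{VIIb})$, namely
\[
\sum_{\iota\in I^E}\Pi_{E,q-1}(g)(\xi^E_\iota)\,\omega^E_\iota = \int_E g = \sum_{\iota\in I^E}\Pi_{E,q}(g)(\xi^E_\iota)\,\omega^E_\iota
\]
with $g=\boldsymbol{\beta}\cdot\nabla u^\NN$, which upgrades the projector from degree $q-1$ to degree $q$ and yields the second piece of $\eta_{\text{coef},5}(E)$ after bounding $\Vert m_E(v_h)\Vert_{0,E}\le \Vert v_h\Vert_{0,E}\lesssim|v|_{1,\Omega}$. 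The treatment of $(\text{VIIIc})$ is verbatim the same with $\sigma u^\NN$ in place of $\boldsymbol{\beta}\cdot\nabla u^\NN$, yielding $\eta_{\text{coef},6}(E)$.

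There is no real obstacle here; the only point requiring a bit of care is tracking which quadrature exactness hypothesis ($q\ge 2$) is invoked at each step, so that every discrete norm $\Vert\cdot\Vert_{0,E,\omega}$ appearing on the right-hand side really coincides with (or dominates up to a constant) the usual $L^2(E)$ norm on the piecewise-polynomial factors it is paired with. Once those identifications are made, summing the three elemental bounds and applying a last Cauchy--Schwarz in $E$, together with \eqref{eq:clem-bound}, produces \eqref{eq:bound-VIII}.
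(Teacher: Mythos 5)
Your proposal is correct and follows essentially the same route as the paper: the identity for $(\text{VIIIa})$ via quadrature exactness against the piecewise-constant $\nabla v_h$, and the reduction of $(\text{VIIIb})$, $(\text{VIIIc})$ to the $(\text{VII})$ argument with $f$ replaced by $\boldsymbol{\beta}\cdot\nabla u^\NN$ and $\sigma u^\NN$, including the mean-value split and the $\Pi_{E,q-1}\to\Pi_{E,q}$ upgrade via \eqref{eq:mean}. The paper merely states these steps by analogy; you have filled in exactly the details it leaves implicit.
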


At this point, we are ready to derive the announced a posteriori error estimates. In order to get an upper bound of the error, we concatenate \eqref{eq:inf-sup}, \eqref{eq:split-a}, \eqref{eq:split-III}, \eqref{eq:split-V}, and use the bounds given in Lemmas  \ref{lem:bound-I} to \ref{lem:bound-VIII}, arriving at the following result.

\begin{theorem}[a posteriori upper bound of the error]\label{theo:aposteriori-up}
Let $u^{\cal N\!N} \in V^{\cal N\!N}$ satisfy \eqref{eq:min-prob}. Then, the error $u-u^{\cal N\!N} $ can be estimated from above as follows:
\begin{equation}\label{eq:aposteriori1}
\vert u - u^\NN \vert_{1,\Omega} \lesssim  \left( \eta_{\rm res}  + \eta_{\rm loss} + \eta_{\rm coef} + \eta_{\rm rhs} \right) \,,
\end{equation}
where
\begin{equation}
\begin{split}
\eta_{\rm res}^2 &= \sum_{E \in {\cal T}_h} \eta_{\rm res}^2(E) \,, \quad  \eta_{\rm coef}^2 = \sum_{E \in {\cal T}_h} \sum_{k=1}^6\eta_{{\rm coef},k}^2(E)  \,, \quad  \eta_{\rm rhs}^2 = \sum_{E \in {\cal T}_h} \sum_{k=1}^2\eta_{{\rm rhs},k}^2(E)\,.
\end{split}
\end{equation}
\end{theorem}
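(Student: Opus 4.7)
The proof is essentially a concatenation of the apparatus already built in Sections~\ref{sec:setting}--\ref{sec:aposteriori-theory}, so my plan is to assemble the pieces in the order the paper has prepared them and verify that nothing depends on the unknown exact solution beyond what can be controlled by the stated estimators. I would begin by fixing an arbitrary $v \in V$ and invoking \eqref{eq:inf-sup}, which reduces the task to bounding $a(u-u^\NN,v)$ uniformly in $v$ by the claimed quantities times $|v|_{1,\Omega}$. Using $a(u,v)=F(v)$ and the algebraic decomposition \eqref{eq:split-a}, the numerator is partitioned into the three terms $(\mathrm{I})$, $(\mathrm{II})$, $(\mathrm{III})$; Lemmas~\ref{lem:bound-I} and~\ref{lem:bound-III} immediately take care of the first two, contributing to $\eta_{\rm rhs}$ and $\eta_{\rm coef}$ respectively.

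The only subtlety is the handling of $(\mathrm{III})$. Here I would specialize $v_h=I_h^C v$ in \eqref{eq:split-III} so that $v-v_h$ enjoys the Clément estimates \eqref{eq:clement}, while $v_h$ simultaneously satisfies \eqref{eq:clem-bound}; this is the single test-function choice that makes all four remaining lemmas cooperate, since Lemmas~\ref{lem:bound-VI}, \ref{lem:bound-VII} and~\ref{lem:bound-VIII} all tacitly use \eqref{eq:clem-bound} to convert $|v_h|_{1,\Omega}$ back into $|v|_{1,\Omega}$. With this fixed choice, Lemma~\ref{lem:bound-IV} controls $(\mathrm{IV})$ by $\eta_{\rm res}$, then the decomposition \eqref{eq:split-V} splits $(\mathrm{V})$ into the loss-type term $(\mathrm{VI})$ and the two quadrature-consistency terms $(\mathrm{VII})$ and $(\mathrm{VIII})$, which are bounded by $\eta_{\rm loss}$, the remaining piece of $\eta_{\rm rhs}$, and the remaining pieces of $\eta_{\rm coef}$, via Lemmas~\ref{lem:bound-VI}, \ref{lem:bound-VII} and~\ref{lem:bound-VIII} respectively.

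Collecting all inequalities, dividing by $|v|_{1,\Omega}$, and passing to the supremum over $v \in V$ yields \eqref{eq:aposteriori1} after absorbing $1/\alpha$ into the hidden constant. I do not foresee a genuine obstacle: the delicate analytic work (projection-error bounds via zero-mean subtraction, Clément interpolation, exactness of the quadrature rule on $\mathbb{P}_q$, equivalence of norms on $V_h$) has already been discharged in the preceding lemmas. The only point that warrants an explicit remark in the write-up is that the specific test function $v_h=I_h^C v$ used in the proof of Lemma~\ref{lem:bound-IV} is the same one implicitly required in Lemmas~\ref{lem:bound-VI}--\ref{lem:bound-VIII}, so that the splittings \eqref{eq:split-III} and \eqref{eq:split-V} are applied consistently throughout.
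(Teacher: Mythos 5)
Your proposal is correct and follows exactly the paper's own argument: concatenating \eqref{eq:inf-sup}, \eqref{eq:split-a}, \eqref{eq:split-III}, \eqref{eq:split-V} and invoking Lemmas \ref{lem:bound-I}--\ref{lem:bound-VIII} with the single choice $v_h = I_h^C v$. Your explicit remark that this one Cl\'ement interpolant must serve simultaneously in Lemmas \ref{lem:bound-IV}--\ref{lem:bound-VIII} is a point the paper leaves implicit, but it is the same proof.
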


We realize that the global estimator $\eta= \eta_{\rm res}  + \eta_{\rm loss} + \eta_{\rm coeff} + \eta_{\rm rhs}$ is the sum of four contributions: $\eta_{\rm res}$ is the classical residual-based estimator, $\eta_{\rm loss}$ measures how small the minimized loss function is, i.e., how well the discrete variational equations \eqref{eq:PGproblem} are fulfilled, whereas $\eta_{\rm coef}$ and $\eta_{\rm rhs}$ reflect the error in approximating elementwise the coefficients of the operator and the right-hand side by polynomials of degrees related to the precision of the quadrature formula.

\medskip

It is possible to derive from \eqref{eq:aposteriori1} an element-based a posteriori error estimator, which can be used to design an adaptive strategy of mesh refinement (see, e.g. \cite{NochettoCIME2012}). To this end, from now on we assume that the basis $\{\varphi_i : i\in I_h\}$ of $V_h$, introduced to define \eqref{eq:residuals}, is the canonical Lagrange basis associated with the nodes of the triangulation $\Th$. 
Given any $E \in \Th$, we introduce the elemental index set $I_h^E =\{ i \in I_h : E \subset {\rm supp}\, \varphi_i\}$, where 
${\rm supp}\, \varphi_i$ is the support of $\varphi_i$, and we define a local contribution to the term $\eta_{\rm loss}$ as follows:
\begin{equation}\label{eq:eta-local-loss}
\eta_{\rm loss}^2(E) = C_h^2 \sum_{i \in I_h^E}  r_{h,i}^2(u^\NN) \,,
\end{equation}
which satisfies 
$$
\eta_{\rm loss}^2 \leq  \sum_{E \in {\cal T}_h} \eta_{\rm loss}^2(E) \,.
$$
With this definition at hand, we can introduce the following elemental error estimator.
\begin{definition}[elemental error estimator]\label{def:error-est}
For any $E \in {\cal T}_h$, let us set
\begin{equation}\label{eq:aposteriori3}
\eta^2(E)  = \eta_{\rm res}^2(E)  + \eta_{\rm loss}^2(E) + \sum_{k=1}^6\eta_{{\rm coef},k}^2(E)  +  \sum_{k=1}^2\eta_{{\rm rhs},k}^2(E) \,,
\end{equation}
where the addends in this sum are defined, respectively, in \eqref{eq:eta-res}, \eqref{eq:eta-local-loss}, \eqref{eq:eta-coef-13} and \eqref{eq:eta-coef-46}, \eqref{eq:eta-f} and \eqref{eq:eta-f-2}.
\end{definition}

Then, Theorem \ref{theo:aposteriori-up} can be re-formulated in terms of these quantities.
\begin{corollary}[localized a posteriori error estimator]\label{cor:aposteriori}
The error $u-u^{\cal N\!N} $ can be estimated as follows:
\begin{equation}\label{eq:aposteriori2}
\vert u - u^\NN \vert_{1,\Omega} \lesssim  \Big( \sum_{E \in {\cal T}_h}  \eta^2(E) \Big)^{1/2} \,.
\end{equation}
\end{corollary}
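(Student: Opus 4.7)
The plan is to deduce Corollary \ref{cor:aposteriori} from Theorem \ref{theo:aposteriori-up} by a purely algebraic manipulation that turns the four global addends into a single localized sum of elemental estimators.

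First, I would apply the elementary inequality $a+b+c+d \leq 2(a^2+b^2+c^2+d^2)^{1/2}$ (valid for non-negative reals) to the right-hand side of \eqref{eq:aposteriori1}, obtaining
\[
|u-u^\NN|_{1,\Omega} \lesssim \bigl(\eta_{\rm res}^2 + \eta_{\rm loss}^2 + \eta_{\rm coef}^2 + \eta_{\rm rhs}^2\bigr)^{1/2}.
\]
By definition $\eta_{\rm res}^2$, $\eta_{\rm coef}^2$ and $\eta_{\rm rhs}^2$ are already written as sums over $E \in \Th$ of the local quantities $\eta_{\rm res}^2(E)$, $\sum_k \eta_{{\rm coef},k}^2(E)$ and $\sum_k \eta_{{\rm rhs},k}^2(E)$, so these three contributions pose no issue.

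The only genuinely non-trivial step concerns $\eta_{\rm loss}^2$, since the loss term $C_h^2 R_h^2(u^\NN) = C_h^2 \sum_{i \in I_h} r_{h,i}^2(u^\NN)$ is indexed by nodes, not by elements. Here I would invoke the assumption that $\{\varphi_i\}$ is the Lagrange nodal basis, so that for each $i \in I_h$ the cardinality of $\{E \in \Th : i \in I_h^E\}$ equals the number of elements meeting the node, which is uniformly bounded above by a constant depending only on the shape-regularity of $\Th$, and in particular is bounded below by $1$. Exchanging the order of summation yields
\[
\sum_{E \in \Th} \eta_{\rm loss}^2(E) = C_h^2 \sum_{E \in \Th} \sum_{i \in I_h^E} r_{h,i}^2(u^\NN) = C_h^2 \sum_{i \in I_h} \#\{E : i \in I_h^E\}\, r_{h,i}^2(u^\NN) \geq \eta_{\rm loss}^2,
\]
giving the required one-sided bound.

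Collecting these observations,
\[
\eta_{\rm res}^2 + \eta_{\rm loss}^2 + \eta_{\rm coef}^2 + \eta_{\rm rhs}^2 \leq \sum_{E \in \Th} \Bigl( \eta_{\rm res}^2(E) + \eta_{\rm loss}^2(E) + \sum_{k=1}^{6}\eta_{{\rm coef},k}^2(E) + \sum_{k=1}^{2}\eta_{{\rm rhs},k}^2(E) \Bigr) = \sum_{E \in \Th} \eta^2(E),
\]
which, combined with the inequality of the first step, yields \eqref{eq:aposteriori2}. I do not foresee any serious obstacle: the whole argument is a bookkeeping rearrangement, with the only subtle point being the node-to-element reindexing of $\eta_{\rm loss}^2$, absorbed by the shape-regularity of the triangulation into the hidden constant in $\lesssim$.
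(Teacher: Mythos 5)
Your proposal is correct and follows essentially the same route as the paper: the paper likewise passes from Theorem \ref{theo:aposteriori-up} via the elementary inequality $a+b+c+d\lesssim (a^2+b^2+c^2+d^2)^{1/2}$ and the observation $\eta_{\rm loss}^2 \leq \sum_{E\in\Th}\eta_{\rm loss}^2(E)$, the latter being exactly your node-to-element reindexing (each $i\in I_h$ lies in $I_h^E$ for at least one $E$). The remaining terms are already elementwise sums by definition, so nothing further is needed.
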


Inequality \eqref{eq:aposteriori2} guarantees the {\em reliability} of the proposed error estimator, namely the estimator does provide a computable upper bound of the discretization error. Next result assures that the estimator is also {\em efficient}, namely it does not overestimate the error.
\begin{theorem}[a posteriori lower bound of the error]\label{theo:aposteriori-down}
Let $u^{\cal N\!N} \in V^{\cal N\!N}$ satisfy \eqref{eq:min-prob}. Then, the error $u-u^{\cal N\!N} $ can be locally estimated from below as follows: for any $E \in {\cal T}_h$ it holds
\begin{eqnarray}\label{eq:aposteriori3a}
\eta_{{\rm res}}(E)   &\lesssim& \vert u - u^\NN \vert_{1,D_E} + \sum_{E' \subset D_E} \left(
 \sum_{k=1}^3 \eta_{{\rm coef},k}^2(E') + \eta_{{\rm rhs},1}^2(E') \right)^{1/2} \!\!\!\!\!\!\!\!\,, \\ \label{eq:aposteriori3}
\frac{c_h}{C_h} \, \eta_{{\rm loss}}(E) &\lesssim & \vert u - u^\NN \vert_{1,D_E} + \sum_{E' \subset D_E} \left(
 \sum_{k=1}^6 \eta_{{\rm coef},k}^2(E') + \sum_{k=1}^2\eta_{{\rm rhs},k}^2(E') \right)^{1/2}   \label{eq:aposteriori3b} 
\end{eqnarray}
\end{theorem}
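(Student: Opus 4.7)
The two inequalities are proved by complementary techniques: the residual bound \eqref{eq:aposteriori3a} by classical Verf\"urth-style bubble-function analysis, and the loss bound \eqref{eq:aposteriori3b} by a discrete-duality argument that brings in the norm-equivalence constants of Lemma \ref{lem:equi-norm}.

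For \eqref{eq:aposteriori3a} I would follow the standard template.  Let $b_E$ be the polynomial element bubble supported in $\bar E$, and set $v_E=b_E\,{\rm bulk}_E(u^\NN)\in H^1_0(\Omega)$.  Since $u$ solves the PDE strongly on $E$, integration by parts gives
\[
a(u-u^\NN,v_E)=\int_E\bigl(f+\nabla\!\cdot\!(\mu\nabla u^\NN)-\boldsymbol{\beta}\!\cdot\!\nabla u^\NN-\sigma u^\NN\bigr)v_E .
\]
Adding and subtracting $\Pi_{E,q-1}f$, $\Pi_{E,q}(\mu\nabla u^\NN)$, $\Pi_{E,q-1}(\boldsymbol{\beta}\!\cdot\!\nabla u^\NN)$ and $\Pi_{E,q-1}(\sigma u^\NN)$, and integrating by parts back the divergence applied to $\mu\nabla u^\NN-\Pi_{E,q}(\mu\nabla u^\NN)$, isolates $\int_E{\rm bulk}_E(u^\NN)v_E$ with the remainder controlled by $\eta_{{\rm rhs},1}(E)$ and $\eta_{{\rm coef},k}(E)$, $k=1,2,3$.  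The inverse inequalities $\|\phi\|_{0,E}^2\lesssim\int_E b_E\phi^2$ and $|b_E\phi|_{1,E}\lesssim h_E^{-1}\|\phi\|_{0,E}$, valid for polynomials $\phi$ of bounded degree, then upgrade the identity to $h_E\|{\rm bulk}_E(u^\NN)\|_{0,E}\lesssim |u-u^\NN|_{1,E}+(\text{local osc})$.  For the jump part, a standard edge-bubble $b_e$ lifted to the two-element patch produces $h_E^{1/2}\|{\rm jump}_e(u^\NN)\|_{0,e}\lesssim |u-u^\NN|_{1,D_E}+(\text{osc on }D_E)$, which explains the neighborhood $D_E$ on the right-hand side.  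This part is routine modulo the add-and-subtract bookkeeping forced by the projected coefficients.

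For \eqref{eq:aposteriori3b} the idea is discrete duality.  Set $\boldsymbol{r}^E=(r_{h,i}(u^\NN))_{i\in I_h^E}$, so $\eta_{\rm loss}(E)=C_h\|\boldsymbol{r}^E\|_2$; pick $\boldsymbol{w}$ supported on $I_h^E$ with $\|\boldsymbol{w}\|_2=1$ and set $w_h=\sum_{i\in I_h^E}w_i\varphi_i$, which, by the nodal nature of the basis, is supported in $D_E$.  Then
\[
\langle\boldsymbol{r}^E,\boldsymbol{w}\rangle = F_h(w_h)-a_h(u^\NN,w_h) = a(u-u^\NN,w_h)-({\rm I})_{w_h}-({\rm II})_{w_h}-({\rm VII})_{w_h}-({\rm VIII})_{w_h},
\]
obtained by reversing the chain of splittings \eqref{eq:split-a}--\eqref{eq:split-V} with $v=v_h=w_h$.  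Lemmas \ref{lem:bound-I}, \ref{lem:bound-III}, \ref{lem:bound-VII}, \ref{lem:bound-VIII} applied to $w_h$ bound the four parenthesized quantities by the prescribed $\ell^2$-sums of the $\eta$'s over $E'\subset D_E$ (elements outside $D_E$ contribute nothing, since both $w_h$ and $\nabla w_h$ vanish there), while continuity of $a$ yields $|a(u-u^\NN,w_h)|\leq \|a\|\,|u-u^\NN|_{1,D_E}|w_h|_{1,D_E}$.  The left inequality in \eqref{eq:norm-equiv-Vh} furnishes $|w_h|_{1,\Omega}\leq c_h^{-1}$; taking the supremum over admissible $\boldsymbol{w}$ delivers $c_h\|\boldsymbol{r}^E\|_2\lesssim |u-u^\NN|_{1,D_E}+(\text{osc on }D_E)$, which is \eqref{eq:aposteriori3b} after recognizing $c_h\|\boldsymbol{r}^E\|_2=(c_h/C_h)\eta_{\rm loss}(E)$.

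The main obstacle is the localization step in the loss bound: the oscillation estimates of Lemmas \ref{lem:bound-I}--\ref{lem:bound-VIII} are stated globally, and one must verify that the Cauchy--Schwarz step really collapses to $E'\subset D_E$ once $w_h$ is supported in $D_E$.  The ratio $c_h/C_h$ on the left-hand side of \eqref{eq:aposteriori3b} then reflects the price paid for converting a discrete $\ell^2$-norm of residuals into an energy-type quantity through Lemma \ref{lem:equi-norm}.
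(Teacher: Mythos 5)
Your proposal is correct and follows essentially the same route as the paper: element- and edge-bubble functions with the add-and-subtract bookkeeping for the projected data to obtain \eqref{eq:aposteriori3a}, and a discrete duality argument with a test function $v_h^E=\sum_{i\in I_h^E}v_i\varphi_i$ supported in $D_E$, combined with the left inequality of \eqref{eq:norm-equiv-Vh} and the localized versions of Lemmas \ref{lem:bound-I}, \ref{lem:bound-III}, \ref{lem:bound-VII}, \ref{lem:bound-VIII}, to obtain \eqref{eq:aposteriori3b}. The localization concern you flag is handled exactly as you suggest, since all the oscillation bounds are sums of elemental contributions each multiplied by $\vert v_h^E\vert_{1,E'}$, which vanishes outside $D_E$.
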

\proof
To derive \eqref{eq:aposteriori3a}, let us first consider the bulk contribution to the estimator. We apply a classical argument in a posteriori analysis, namely we introduce a non-negative bubble function $b_E \in V$ with support in $E$ and such that $\Vert \phi \Vert_{0,E} \simeq \Vert  b_E^{1/2} \phi \Vert_{0,E} $ and $\Vert \phi \Vert_{0,E} \simeq (\Vert b_E \phi \Vert_{0,E} + h_E\vert b_E \phi \vert_{1,E})$ for all $\phi \in \mathbb{P}_q(E)$. 

Let us set $w_E={\rm bulk}_E(u^\NN) b_E \in V$. Then, 
$$
\Vert {\rm bulk}_E(u^\NN) \Vert_{0,E}^2 \lesssim \int_E {\rm bulk}_E(u^\NN)^2 b_E = \int_E {\rm bulk}_E(u^\NN) \, w_E
$$
Writing
\begin{equation*}
\begin{split}
{\rm bulk}_E(u^\NN) &= (f-Lu^\NN) + \nabla(\Pi_{E,q} (\mu \nabla u^\NN)-\mu \nabla u^\NN) \\
& \  +  \  \Pi_{E,q-1}( \boldsymbol{\beta}\cdot \nabla u^\NN) - \boldsymbol{\beta}\cdot \nabla u^\NN 
\ + \   \Pi_{E,q-1}( \sigma u^\NN) -  \sigma u^\NN  \\
& \ + \ \Pi_{q-1,E}f-f \,,  \\
\end{split}
\end{equation*}
we obtain
\begin{equation*}
\begin{split}
\int_E {\rm bulk}_E(u^\NN) \, w_E &= a(u-u^\NN, w_E) - \int_E (\Pi_{E,q} (\mu \nabla u^\NN)-\mu \nabla u^\NN)\cdot \nabla w_E \\
& \  +  \  \int_E (\Pi_{E,q-1}( \boldsymbol{\beta}\cdot \nabla u^\NN)  - \boldsymbol{\beta}\cdot \nabla u^\NN )(w_E - m(w_E)) \\
&  \ + \   \int_E (\Pi_{E,q-1}( \sigma u^\NN) -  \sigma u^\NN) (w_E - m(w_E))  \\
& \ + \ \int_E (\Pi_{q-1,E}f-f)(w_E - m(w_E))  \,,  \\
\end{split}
\end{equation*}
whence
$$
\Vert {\rm bulk}_E(u^\NN) \Vert_{0,E}^2 \lesssim \left( \vert u - u^\NN \vert_{1,E} + \sum_{k=1}^3 \eta_{{\rm coef},k}(E) + \eta_{{\rm rhs},1}(E) \right) \vert w_E \vert_{1,E} \,.
$$
Using $ \vert w_E \vert_{1,E} \lesssim h_E^{-1} \Vert {\rm bulk}_E(u^\NN) \Vert_{0,E}$, we arrive at
\begin{equation}\label{eq:aposteriori4}
h_E \Vert {\rm bulk}_E(u^\NN) \Vert_{0,E} \lesssim \vert u - u^\NN \vert_{1,E} + \sum_{k=1}^3 \eta_{{\rm coef},k}(E) + \eta_{{\rm rhs},1}(E) \,.
\end{equation}

Let us now turn to the jump contribution to the estimator. Given an edge $e \subset \partial E$ shared with the element $E'$, we introduce a non-negative bubble function $b_e \in V$, with support in $E \cup E'$ and such that $\Vert \phi \Vert_{0,e} \simeq \Vert  b_e^{1/2} \phi \Vert_{0,e} $ and $ (h_E^{-1/2} \Vert b_e \phi \Vert_{0,E} + h_E^{1/2}\vert b_e \phi \vert_{1,E}) \lesssim \Vert \phi \Vert_{0,e}$ for all $\phi \in \mathbb{P}_q(E)$. 

Let us extend the function ${\rm jump_e(u^\NN)}$ onto $E \cup E'$ to be constant in the normal direction to $e$, obtaining a polynomial of degree $q$ in each element. Let us set $w_e = {\rm jump_e(u^\NN)} b_e \in V$. Then, writing $E_1=E$ and $E_2=E'$, one has
\begin{equation*}
\begin{split}
\Vert {\rm jump}_e(u^\NN) \Vert_{0,e}^2 &\lesssim \int_e {\rm jump}_e(u^\NN)^2 b_e = \int_e {\rm jump}_e(u^\NN) \, w_e \\
& \  = \ \int_e {\rm jump}_e(u^\NN - u) \, w_e \,, \\
& \  = \ \sum_{i=1}^2 \int_{E_i} \nabla \cdot [ (\Pi_{E_1,q}(\mu \nabla u^\NN) - \mu \nabla u) \, w_e ] \\
& \  = \ \sum_{i=1}^2 \int_{E_i}   [ \nabla \cdot \Pi_{E_1,q}(\mu \nabla u^\NN) - \nabla \cdot (\mu \nabla u) ]  w_e  \\
& \  \quad + \ \sum_{i=1}^2 \int_{E_i}   [ \Pi_{E_1,q}(\mu \nabla u^\NN) - \mu \nabla u ] \cdot \nabla  w_e \,.
\end{split}
\end{equation*}
We now recall that
$$
\nabla \cdot \Pi_{E_i,q} (\mu \nabla u^\NN)  = {\rm bulk}_{E_i}(u^\NN) - \Pi_{E_i,q-1}f + \Pi_{E_i,q-1}( \boldsymbol{\beta}\cdot \nabla u^\NN + \sigma u^\NN) \,,
$$
as well as $\nabla \cdot (\mu \nabla u) = - f + \boldsymbol{\beta}\cdot \nabla u + \sigma u$. We write $u= u^\NN + (u-u^\NN)$ and we proceed as in the proof of \eqref{eq:aposteriori4}, using now the bounds $ \Vert w_e \Vert_{0,E_i} \lesssim h_{E_i}^{1/2} \Vert {\rm jump}_e (u^\NN) \Vert_{0,e}$ and $ \vert w_e \vert_{1,E_i} \lesssim h_{E_i}^{-1/2} \Vert {\rm jump}_e (u^\NN) \Vert_{0,e}$, arriving at the bound
\begin{equation}\label{eq:aposteriori5}
\begin{split}
h_E^{1/2} \sum_{e \subset \partial E} \Vert \,{\rm jump}_e(u^\NN)   \, \Vert_{0,e}  &\lesssim \vert u - u^\NN \vert_{1,D_E} +
\sum_{E' \subset D_E} h_{E'} \Vert {\rm bulk}_{E'}(u^\NN) \Vert_{0,E'} \\
& \ \ \qquad + \sum_{E' \subset D_E} \left(
 \sum_{k=1}^3 \eta_{{\rm coef},k}(E') + \eta_{{\rm rhs},1}(E') \right) \,.
\end{split}
\end{equation}
Together with \eqref{eq:aposteriori4}, this gives the bound \eqref{eq:aposteriori3a}.
In order to derive \eqref{eq:aposteriori3b}, we write \eqref{eq:eta-local-loss} as
$$
C_h^{-1} \eta_{\rm loss}(E) = \left( \sum_{i \in I_h^E}  r_{h,i}^2(u^\NN) \right)^{1/2} = \quad
\sup_{\boldsymbol{v}} \frac1{\Vert \boldsymbol{v}\Vert_2} \sum_{i \in I_h^E}  r_{h,i}(u^\NN) v_i 
$$
where $\boldsymbol{v} = (v_i) \in \mathbb{R}^{{\rm card} I_h^E }$. Defining the function $v_h^E = \sum_{i \in I_h^E} v_i \varphi_i \in V_h$, which is supported in $D_E$, and recalling \eqref{eq:residuals}, we have
$$
\sum_{i \in I_h^E}  r_{h,i}(u^\NN) v_i  = F_h(v_h^E) - a_h(u^\NN,v_h^E)\,.
$$
By the left-hand inequality in \eqref{eq:norm-equiv-Vh}, we obtain
$$
\frac{c_h}{C_h} \, \eta_{{\rm loss}}(E) \ \leq \ \sup_{v_h^E} \frac{F_h(v_h^E) - a_h(u^\NN,v_h^E)}{\vert v_h^E \vert_{1,D_E}}\,. 
$$
Now we write
\begin{equation*}
\begin{split}
F_h(v_h^E) - a_h(u^\NN,v_h^E) & = F_h(v_h^E) - F(v_h^E) \\
& \quad + f(v_h^E) - a(u^\NN,v_h^E)  \\
& \quad + a(u^\NN,v_h^E) - a_h(u^\NN,v_h^E) \,.
\end{split}
\end{equation*}
The term $ F_h(v_h^E) - F(v_h^E) =[F_h(v_h^E) - F_\pi(v_h^E)] +  [F_\pi(v_h^E) - F(v_h^E)]$ can be bounded as done for the terms (I) and (VII) above, yielding
$$
\vert F_h(v_h^E) - F(v_h^E)  \vert \lesssim  \sum_{E' \subset D_E} \left(\eta_{{\rm rhs},1}(E')+ \eta_{{\rm rhs},2}(E') \right) \vert v_h^E \vert_{1,E'} \,.
$$
Similarly, the term $a(u^\NN,v_h^E) - a_h(u^\NN,v_h^E)$ can be handled as done for the terms (III) and (VIII) above, obtaining
$$
\vert a(u^\NN,v_h^E) - a_h(u^\NN,v_h^E)  \vert \lesssim  \sum_{E' \subset D_E} \left(\sum_{k=1}^6\eta_{{\rm coeff},k}(E') \right) \vert v_h^E \vert_{1,E'} \,.
$$
Finally, one has $\vert f(v_h^E) - a(u^\NN,v_h^E) \vert \lesssim \vert u-u^\NN \vert_{1,D_E} \vert v_h^E \vert_{1,D_E}$, thereby concluding the proof of \eqref{eq:aposteriori3b}.
\endproof

%*******************************************************************************
\section{Numerical results}\label{sec:numerics}

%*******************************************************************************

Let us consider the two-dimensional domain $\Omega=(0,1)^2$ and the Poisson problem:
\begin{equation}\label{eq:model-pb-poisson}
\begin{cases}
-\Delta u = f & \text{in \ } \Omega\,, \\
\ \ \, u=g & \text{on \ } \Gamma \,, \end{cases}
\end{equation}
with the functions $f$ and $g$ such that the exact solution, represented in Fig. \ref{fig:solution6}, is 
\begin{equation}\label{eq:sol6}
u(x,y) = \tanh\left[2\left(x^3 - y^4\right)\right]. 
\end{equation}
\begin{figure}[t!]
\centering 
\captionsetup{justification=centering}
  \includegraphics[width=0.75\linewidth]{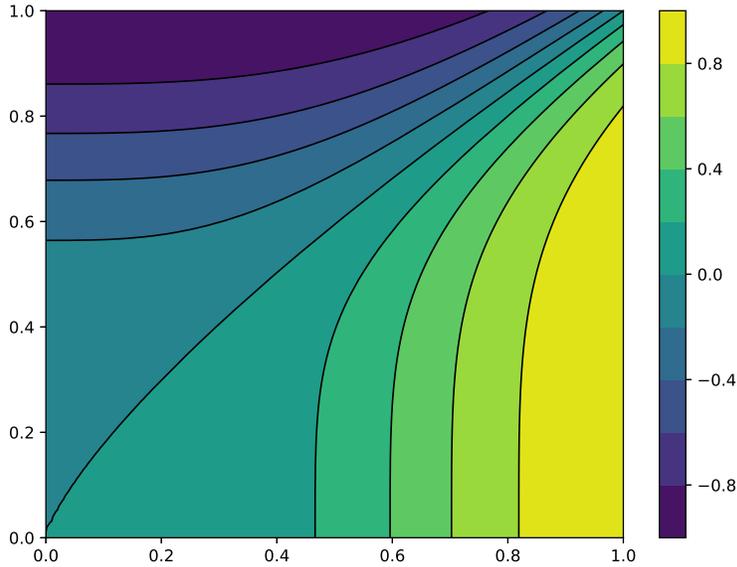} 
  \caption{Graphical representation of the exact solution $u(x,y)$ in \eqref{eq:sol6}}
  \label{fig:solution6}
\end{figure}
Problem \eqref{eq:model-pb-poisson} is numerically solved by the VPINN discretization described in Section \ref{sec:sub_discretization}, extended to handle non-homogeneous Dirichlet condition as mentioned in Remark \ref{rem:other-bcs}. The used VPINN is a feed-forward fully connected neural network comprised by an input layer with input dimension $n=2$, three hidden layers with 50 neurons each and an output layer with a single output variable; it thus contains 7851 trainable weights; furthermore, in all the layers except the output one the activation function is the hyperbolic tangent. The VPINN output is modified as described in \cite{BeCaPi2021} to exactly impose the Dirichlet boundary conditions. Gaussian quadrature rules of order $q=3$ are used in the definition of the loss function. 

For ease of implementation, the orthogonal projection operators $\Pi_{E,k}$, defined in Section \ref{sec:aposteriori-theory}, are mimiked by interpolation operators as follows. Let us initially consider the elemental Lagrange interpolation operator ${\cal I}_{E,k}:C^0(E)\rightarrow \mathbb P_k(E)$; then, to guarantee orthogonality to constants, the projection operator $\tilde{\Pi}_{E,k}:C^0(E)\rightarrow \mathbb P_k(E)$ is defined by setting 
\[
\tilde{\Pi}_{E,k}\varphi  := {\cal I}_{E,k}\varphi  + \dfrac{\int_E \left(\varphi - {\cal I}_{E,k}\varphi\right)}{\vert E\vert}, \hspace{0.5cm}\forall \varphi \in C^0(E),
\]
where, in practice, the integral $\int_E \left(\varphi - {\cal I}_{E,k}\varphi\right)$ can be computed with quadrature rules that are more accurate than the ones used in the other operations. In this work we use quadrature rules of order 7 in each element.

The VPINN is trained on different meshes and the corresponding error estimators $\left(\sum_{E \in {\cal T}_h}  \eta^2(E) \right)^{1/2}$ are computed. Once more, when exact integrals are involved, they are approximated with higher order quadrature rules. The obtained results are shown in Fig. \ref{fig:decays}, where the values of the $H^1$-error and the a posteriori estimator are displayed for several meshes of stepsize $h$.  Remarkably,  the error estimators  (red dots) behave very similarly to the corresponding energy errors (blue dots). Moreover, coherently with the results discussed in \cite{BeCaPi2021}, after an initial preasymptotic phase all dots are aligned on straight lines with slopes very close to 4 (the slope of the red line is 3.81, the slope of the blue line is 3.92).\\
\begin{figure}[t!]
\centering 
\captionsetup{justification=centering}
  \includegraphics[width=0.6\linewidth]{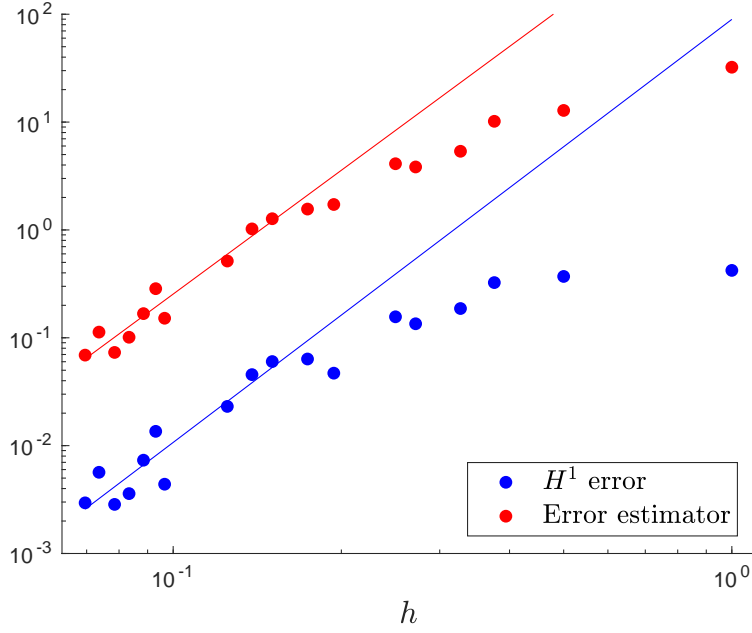} 
  \caption{$H^1$ errors (blue dots) obtained by training the same VPINN on different meshes,  and corresponding error estimators (red dots)}
  \label{fig:decays}
\end{figure}

It is also interesting to note that the terms appearing in the a posteriori estimator (recall \eqref{eq:aposteriori1}) exhibit different behaviors during the training of a single VPINN. This phenomenon is highlighted in Fig. \ref{fig:during_training}, where one can observe the evolution of the quantities $\eta_{\rm rhs}$, $\eta_{\rm coef}$, $\eta_{\rm res}$, $\eta_{\rm loss}$, $\eta$ and $\vert u - u^\NN \vert_{1,\Omega}$, where $\eta_. = \left(\sum_{E \in {\cal T}_h}  \eta_.^2(E) \right)^{1/2}$. It can be observed that, during this training, while the value of the loss function decreases, the accuracy remains almost constant because other sources of error, independent of the neural network, prevail. 
\begin{figure}[t!]
\centering 
\captionsetup{justification=centering}
  \includegraphics[width=0.6\linewidth]{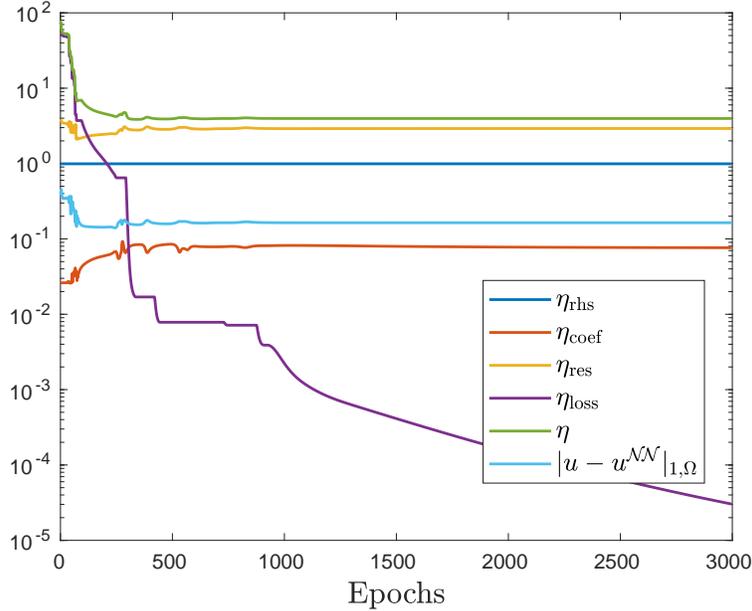} 
  \caption{Evolution of the addends of the error estimator $\eta$ during training}
  \label{fig:during_training}
\end{figure}

%*******************************************************************************
\section{Conclusions}\label{sec:conclusions}
%*******************************************************************************

We considered the discretization of a model elliptic boundary-value problem by variational physics-informed neural networks (VPINNs), in which test functions are continuous, piecewise linear functions on a triangulation of the domain. The scheme can be viewed as an instance of a least-square/Petrov-Galerkin method.

We introduced an a posteriori error estimator, which sums-up four contributions: the equation residual (measuring the elemental bulk residuals and the edge jump terms, for approximated coefficients and right-hand side), the coefficients' oscillation,  the right-hand side's oscillation, and a scaled value of the loss-function. The latter term corresponds to an inexact solve of the algebraic system arising from the discretization of the variational equations.

The main result of the paper is the proof that the estimator provides a global upper bound and a local lower bound for the energy norm of the error between the exact and VPINN solutions.  In other words, the a posteriori estimator is both reliable and efficient. 
Numerical results show an excellent agreement with the theoretical predictions.

In a forthcoming paper, we will investigate the use of the proposed estimator to design an adaptive strategy of discretization.

\bigskip
\noindent
{\bf Acknowledgements.}  The authors performed this research in the framework of the Italian MIUR Award ``Dipartimenti di Eccellenza 2018-2022" granted to the Department of Mathematical Sciences, Politecnico di Torino (CUP: E11G18000350001). The research leading to this paper has also been partially supported by the SmartData@PoliTO center for Big Data and Machine Learning technologies.
SB  was supported by the Italian MIUR PRIN Project 201744KLJL-004, CC was supported by the Italian MIUR PRIN Project 201752HKH8-003. 
The authors are members of the Italian INdAM-GNCS research group.

\bibliographystyle{siam}
\bibliography{bibliography}% common bib file

\end{document}